\def\pmod #1{\ ({\rm{mod}}\ #1)}
\def\Z{\Bbb Z}
\def\N{\Bbb N}
\def\Q{\Bbb Q}
\def\F{\Bbb F}
\def\bg{\bigg}
\def\({\bg(}
\def\){\bg)}
\def\sign{{\rm sign}}
\def\ve{\varepsilon}
\def\Ack{\medskip\noindent {\bf Acknowledgments}}
\theoremstyle{plain}
\newtheorem{theorem}{Theorem}
\newtheorem{lemma}{Lemma}
\theoremstyle{definition}
\theoremstyle{remark}
\begin{document}
 \baselineskip=17pt
\hbox{} {}
\medskip
\title[Cubes in finite fields and related permutations]
{Cubes in finite fields and related permutations}
\date{}
\author[Hai-Liang Wu and Yue-Feng She] {Hai-Liang Wu and Yue-Feng She}

\thanks{2020 {\it Mathematics Subject Classification}.
Primary 11A15; Secondary 05A05, 11R18.
\newline\indent {\it Keywords}. permutations, primitive roots, finite fields.
\newline \indent Supported by the National Natural Science
Foundation of China (Grant No. 11971222).}

\address {(Hai-Liang Wu) School of Science, Nanjing University of Posts and Telecommunications, Nanjing 210023, People's Republic of China}
\email{\tt whl.math@smail.nju.edu.cn}

\address {(Yue-Feng She) Department of Mathematics, Nanjing
University, Nanjing 210093, People's Republic of China}
\email{{\tt she.math@smail.nju.edu.cn}}

\begin{abstract}
Let $p=3n+1$ be a prime with $n\in\mathbb{N}=\{0,1,\cdots\}$, and let $g\in\mathbb{Z}$ be a primitive root modulo $p$. Let $0<a_1<\cdots<a_n<p$ be all the cubic residues modulo $p$ in the interval $(0,p)$. Then clearly the sequence
$$a_1\ {\rm mod}\ p,\ a_2\ {\rm mod}\ p,\cdots, a_n\ {\rm mod}\ p$$
is a permutation $s_p(g)$ of the sequence
$$g^3\ {\rm mod}\ p,\ g^6\ {\rm mod}\ p,\cdots, g^{3n}\ {\rm mod}\ p.$$
In this paper, we shall determine the sign of this permutation.
\end{abstract}
\maketitle
\section{Introduction}
\setcounter{lemma}{0}
\setcounter{theorem}{0}
\setcounter{corollary}{0}
\setcounter{remark}{0}
\setcounter{equation}{0}
\setcounter{conjecture}{0}
\setcounter{proposition}{0}

Investigating permutations over finite fields is an active topic in both number theory and finite fields. Using the Lagrange interpolation formula, clearly each permutation over a finite field is in fact induced by a permutation polynomial. For example, let $p$ be an odd prime, and let $a$ be an integer with $p\nmid a$. Then $x\ {\rm mod}\ p\mapsto ax\ {\rm mod}\ p\ (x=0,1,\cdots,p-1)$ is a permutation over the finite field $\F_p=\Z/p\Z$. Zolotarev \cite{Z} showed that the sign of this permutation is precisely the Legendre symbol $(\frac{a}{p})$. Later Lerch \cite{ML} extended this result to the ring of residue classes modulo an arbitrary positive integer. In 2015 Brunyate and Clark \cite{Brunyate} extended this result to the higher dimensional vector spaces over finite fields.

Recently, Sun \cite{S} studied permutations involving squares in finite fields. In fact, let $p=2m+1$ be an odd prime. Let $0<b_1<\cdots<b_m<p$ be all the quadratic residues modulo $p$ in the interval $(0,p)$. Then clearly the sequence
$$1^2\ {\rm mod}\ p,\ 2^2\ {\rm mod}\ p,\cdots,m^2\ {\rm mod}\ p$$
is a permutation $\sigma_p$ of the sequence
$$b_1\ {\rm mod}\ p,\ b_2\ {\rm mod}\ p,\cdots, b_m\ {\rm mod}\ p.$$
Let $\sign(\sigma_p)$ denote the sign of $\sigma_p$. Sun \cite[Theorem 1.4]{S} obtained that
$$\sign(\sigma_p)=
\begin{cases}
1   &\mbox{if}\ p \equiv 3\pmod 8,\\
(-1)^{\frac{h(-p)+1}{2}} &\mbox{if} \ p\equiv 7 \pmod 8,
\end{cases}
$$
where $h(-p)$ denotes the class number of $\Q(\sqrt{-p})$. Later Petrov and Sun \cite{Sun era} determined the sign of $\sigma_p$ in the case $p\equiv1\pmod4$.

Motivated by the above work, in this paper, we mainly consider permutations concerning cubes in $\F_p=\Z/p\Z$ ($p$ is an odd prime). The case $p\equiv 2\pmod 3$ is trivial. In fact, clearly in this case
$$\{x^3\ {\rm mod}\ p:\ x=0,1,\cdots,p-1\}=\Z/p\Z,$$
and hence $x\ {\rm mod}\ p\mapsto x^3\ {\rm mod}\ p\ (x=0,1\cdots,p-1)$ is a permutation $\tau_p$ over $\Z/p\Z$. The sign of $\tau_p$ is a direct consequence of Lerch's result \cite{ML} and we have $\sign(\tau_p)=(-1)^{\frac{p+1}{2}}$. Readers may see \cite[Theorem 1.2]{Wang-Wu} for details.

Now we consider the non-trivial case $p\equiv 1\pmod3$. Let $p=3n+1$ be a prime with $n\in\N$, and let $g\in\Z$ be a primitive root modulo $p$. Let $0<a_1<\cdots<a_n<p$ be all the cubic residues modulo $p$ in the interval $(0,p)$. Then clearly the sequence
$$a_1\ {\rm mod}\ p,\ a_2\ {\rm mod}\ p,\cdots, a_n\ {\rm mod}\ p$$
is a permutation $s_p(g)$ of the sequence
$$g^3\ {\rm mod}\ p,\ g^6\ {\rm mod}\ p,\cdots, g^{3n}\ {\rm mod}\ p.$$
In order to state our result, we first introduce some notations. Let
$$\mathcal{P}:=\{0<x<p:\ x\ \text{is a primitive root modulo $p$}\}.$$
It is known that $4p$ can be uniquely written as
\begin{equation}\label{Eq. A in the introduction}
4p=r^2+3s^2\ (r,s\in\Z)
\end{equation}
with $r\equiv1\pmod3$, $s\equiv0\pmod3$ and $3s\equiv (2g^n+1)r\pmod p$. Let $\omega=e^{2\pi i/3}$ be a primitive cubic root of unity. As $p$ splits in $\Z[\omega]$ and $\Z[\omega]$ is a PID, we can write $p=\pi\bar{\pi}$ for some primary prime element $\pi\in\Z[\omega]$ with $(\frac{g}{\pi})_3=\omega$, where $\bar{\pi}$ denotes the conjugation of $\pi$ and the symbol $(\frac{\cdot}{\pi})_3$ is the cubic residue symbol modulo $\pi$. Readers may refer to \cite[Chapter 9]{IR} for details.

We also define
$$\delta_p:=|\{0<x<p/4:\ x\ \text{is a cubic residue modulo $p$}\}|,$$
$$\alpha_p:=|\{0<x<p/2:\ x\ \text{is a $6$-th power residue modulo $p$}\}|,$$
and
$$\gamma_p:=\left|\left\{0<x<p/2:\ \(\frac{x}{p}\)=1\ \text{and}\ \(\frac{x}{\pi}\)_3=\omega^2\right\}\right|,$$
where $|S|$ denotes the cardinality of a set $S$

With the above notations, we now state our main result.
\begin{theorem}\label{Thm. A} Let $p=3n+1$ be a prime with $n\in\N$. Then we have

{\rm (i)} If $p\equiv1\pmod{12}$, then
$$|\{g\in\mathcal{P}:\ \sign(s_p(g))=1\}|=|\{g\in\mathcal{P}:\ \sign(s_p(g))=-1\}|.$$

{\rm (ii)} If $p\equiv 7\pmod{12}$, then $\sign(s_p(g))$ is independent on the choice of $g$. Also, we have
$$\sign(s_p(g))=(-1)^{\delta_p+(1+\alpha_p)(1+r)+(h(-p)+1-2\alpha_p)(2-r+3s)/4+s(1+\gamma_p)+(n-2)/4},$$
where $h(-p)$ is the class number of $\Q(\sqrt{-p})$.
\end{theorem}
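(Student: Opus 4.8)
The plan is to first reduce the computation of $\sign(s_p(g))$ to the sign of a single Vandermonde product. Writing $c_k:=\langle g^{3k}\rangle$ for the least positive residue of $g^{3k}$ in $(0,p)$, the set $\{c_1,\dots,c_n\}$ equals $\{a_1,\dots,a_n\}$, and since $a_1<\cdots<a_n$ the Vandermonde $\prod_{i<j}(a_j-a_i)$ is positive. The standard Vandermonde sign identity then gives
$$\sign(s_p(g))=\sign\prod_{1\ls i<j\ls n}(c_j-c_i).$$
I record the structural fact that, because $g$ is a primitive root, $g^3$ has order $n=(p-1)/3$, so $\{g^{3k}:1\ls k\ls n\}$ is exactly the group of $n$-th roots of unity in $\F_p$, listed in the order $g^3,g^6,\dots,g^{3n}=1$.

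For part (i) I would exploit how the sign depends on $g$. Replacing $g$ by another primitive root $g^t$ (so $\gcd(t,p-1)=1$) reorders the sequence $g^3,\dots,g^{3n}$ by the permutation $\mu_t\colon k\mapsto tk\pmod n$ of $\Z/n\Z$, whence $\sign(s_p(g^t))=\sign(s_p(g))\cdot\sign(\mu_t)$. By the Zolotarev--Lerch evaluation of the sign of multiplication on $\Z/n\Z$, together with the CRT factorization $\Z/n\Z\cong\Z/2^e\Z\times\Z/m\Z$ (with $m$ odd), the sign of $\mu_t$ depends only on the $2$-part and equals the sign of $\mu_t$ on $\Z/2^e\Z$. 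When $p\eq1\pmod{12}$ we have $n\eq0\pmod4$, i.e. $e\gs2$, and $t\mapsto\sign(\mu_t\text{ on }\Z/2^e\Z)$ is a nontrivial character of $(\Z/2^e\Z)^*$ (witnessed by $t\eq-1$). Since $2^e\,\|\,(p-1)$, this pulls back to a nontrivial homomorphism $(\Z/(p-1)\Z)^*\to\{\pm1\}$, so as $g$ ranges over $\mathcal P$ the sign takes each value equally often; this proves (i).

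For part (ii) the same mechanism first yields independence of $g$: when $p\eq7\pmod{12}$ we have $n\eq2\pmod4$, i.e. $e=1$, and $\mu_t$ on $\Z/2\Z$ is trivial, so $\sign(\mu_t)=1$ for all admissible $t$. It remains to compute $\sign\prod_{i<j}(c_j-c_i)$ for one convenient primitive root. Here I would use that $p\eq3\pmod4$, so $-1$ is a cubic residue but a quadratic nonresidue, whence the set of cubic residues is stable under $x\mapsto p-x$ and splits into $n/2$ pairs $\{x,p-x\}$. Splitting the Vandermonde according to this pairing separates the sign into (a) combinatorial counting contributions, governed by how many cubic residues, $6$-th power residues, and mixed-character residues lie in $(0,p/4)$ and $(0,p/2)$ — producing the exponents $\delta_p,\alpha_p,\gamma_p$ and the term in $n$; (b) an arithmetic contribution from the product modulo $p$, which by the root-of-unity description is a Vandermonde of $n$-th roots of unity and hence a square root of ${\rm disc}(x^n-1)=\pm n^n$, whose sign ambiguity I would pin down via cubic Gauss and Jacobi sums, bringing in $4p=r^2+3s^2$ and thus $r,s$; and (c) the parity of the quadratic-character-weighted count of residues in $(0,p/2)$, which by Dirichlet's class number formula for $\Q(\sqrt{-p})$ contributes $h(-p)$.

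The main obstacle is step (ii)(b)--(c): one must simultaneously control the cubic (order $3$) data through the Jacobi sum $\to(r,s)$ and the quadratic (order $2$) data through the class number $\to h(-p)$, while keeping the two interlocked correctly inside the sextic residue structure recorded by $\alpha_p,\gamma_p$. Reconciling the mod-$p$ determination of the Vandermonde's sign with its genuine sign in $\Z$, and assembling all the parity bookkeeping into the single stated exponent, is where essentially all the difficulty lies; part (i) and the independence claim in (ii) are comparatively soft consequences of the Zolotarev--Lerch lemma.
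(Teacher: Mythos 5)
Your handling of part (i), and of the independence claim in part (ii), is correct and is genuinely different from the paper's. The paper obtains both facts by evaluating the numerator $\prod_{i<j}(g^{3j}-g^{3i})$ modulo $p$ through the cyclotomic polynomial $\Phi_{p-1}$ (its Lemma 2.4), which isolates the $g$-dependence as the factor $g^{(p-1)/4}$ when $4\mid p-1$ and shows there is none when $4\mid p-3$. Your route via $\sign(s_p(g^t))=\sign(s_p(g))\cdot\sign(\mu_t)$ and the Lerch evaluation of $\sign(\mu_t)$ on $\Z/n\Z$ is sound (the CRT reduction to the $2$-part and the nontriviality witnessed by $t\equiv-1$ both check out), and it is arguably cleaner for these two soft claims since it needs no information about the actual value of the Vandermonde.

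For the formula in part (ii), however, your proposal is a plan rather than a proof, and the gap is precisely where you locate it: items (a)--(c) never specify a mechanism that actually produces the stated exponent. What is needed (and what the paper supplies) is: (1) the identity $\prod_{i<j}(a_j-a_i)\equiv\prod_{0<k<p}k^{r_k}\pmod{\mathfrak p}$, where $r_k$ counts ordered pairs of cubic residues with difference $k$; (2) the key relation $r_k+r_{p-k}=N(k)/9$, where $N(k)$ counts solutions of $y^3-x^3\equiv k\pmod p$ and whose explicit evaluation (Berndt--Evans--Williams, via cubic Jacobi sums) is the sole entry point for $r$ and $s$; (3) a reflection argument turning the leftover sign $(-1)^{\sum_{0<k<p/2}r_{p-k}}$ into $(-1)^{\delta_p}$; (4) the evaluation of $\prod_{x\in A_1}x$, $\prod_{x\in A_{\omega}}x$, $\prod_{x\in A_{\omega^2}}x$ modulo $\mathfrak p$ via the factorizations of $T^n-1$, $T^n-\omega$, $T^n-\omega^2$, which is where $\alpha_p$, $\gamma_p$ and, through $\alpha_p+\beta_p+\gamma_p\equiv(h(-p)+1)/2\pmod 2$, the class number enter; and (5) a final determination of the sign of $n^{n/2}$ modulo $\mathfrak p$ against the residual factor $\omega^{2s/3}$, which the paper settles by splitting into cases according to whether $3$ is a cubic residue modulo $p$ (equivalently $9\mid s$) --- a step your sketch does not mention at all. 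Your proposed pairing of the sorted Vandermonde under $x\mapsto p-x$ is not developed far enough to see whether it could replace steps (1)--(2), so as written part (ii) remains unproved.
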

We will prove Theorem \ref{Thm. A} in the next section.
\maketitle
\section{Proof of Theorem \ref{Thm. A}}
\setcounter{lemma}{0}
\setcounter{theorem}{0}
\setcounter{corollary}{0}
\setcounter{remark}{0}
\setcounter{equation}{0}
\setcounter{conjecture}{0}
We first introduce some notations. Let $p=3n+1$ be a prime with $n\in\N$, and let $g\in\Z$ be a primitive root modulo $p$. Let $\omega=e^{2\pi i/3}$ be a primitive cubic root of unity.

As $p$ splits in $\Z[\omega]$ and $\Z[\omega]$ is a PID, we can write $p=\pi\bar{\pi}$ for some primary prime element $\pi\in\Z[\omega]$ with $(\frac{g}{\pi})_3=\omega$, where $\bar{\pi}$ denotes the conjugation of $\pi$ and the symbol $(\frac{\cdot}{\pi})_3$ is the cubic residue symbol modulo $\pi$. Readers may refer to \cite[Chapter 9]{IR} for details. For convenience, we use the symbol $\mathfrak{p}$ to denote the prime ideal $\pi\Z[\omega]$. Recall that $4p$ can be uniquely written as
\begin{equation}\label{Eq. quadratic forms represents 4p}
4p=r^2+3s^2\ (r,s\in\Z)
\end{equation}
with $r\equiv1\pmod3$, $s\equiv0\pmod3$ and $3s\equiv (2g^n+1)r\pmod p$. We begin with the following result (cf. \cite[Corollary 10.6.2(c)]{BEW}).
\begin{lemma}\label{Lem. sum of two cubes}
For any $0<k<p$, let
$$N(k):=|\{(x,y):\ 0<x,y<p, y^3-x^3\equiv k\pmod p\}|.$$
Then with the above notations we have
$$N(k)=\begin{cases}
 p+r-8   &\mbox{if}\ (\frac{k}{\pi})_3=1,\\
(2p-r+3s-4)/2 &\mbox{if} \ (\frac{k}{\pi})_3=\omega,\\
(2p-r-3s-4)/2 &\mbox{if}\ (\frac{k}{\pi})_3=\omega^2.
\end{cases}$$
\end{lemma}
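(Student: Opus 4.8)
The plan is to compute $N(k)$ by a character-sum argument built on the cubic residue character and then to identify the resulting Jacobi sum with the representation $4p=r^2+3s^2$. Write $\chi$ for the order-$3$ character on $\F_p^{\times}$ given by $\chi(a)=(\frac{a}{\pi})_3$. For any $c\in\F_p^{\times}$ the number of $y\in\F_p^{\times}$ with $y^3=c$ equals $\sum_{j=0}^{2}\chi^j(c)$, which is $3$ when $c$ is a cube and $0$ otherwise. Since the constraints $0<x,y<p$ merely force $x,y\not\eq0\pmod p$, expanding both cube-counts (with $a$ playing the role of $x^3$ and $a+k$ that of $y^3$) gives
$$N(k)=\sum_{\substack{a\in\F_p^{\times}\\ a\not\eq -k}}\ \sum_{i=0}^{2}\sum_{j=0}^{2}\chi^i(a+k)\chi^j(a).$$

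First I would reduce each inner term to a Jacobi sum. The substitution $a=-kt$, together with $\chi(-1)=1$ (valid because $\chi$ has odd order), turns the $(i,j)$-term into $\chi^{i+j}(k)\,J(\chi^j,\chi^i)$, where $J(\al,\be)=\sum_t\al(t)\be(1-t)$; the excluded values $a=0,-k$ correspond exactly to $t=0,1$, and a direct evaluation of the degenerate terms contributes $p-2$ when $i=j=0$ and $-1$ when exactly one of $i,j$ vanishes. Collecting the nine terms according to $i+j\bmod 3$ and using the standard facts $J(\chi,\bar\chi)=-\chi(-1)=-1$ and $J(\chi^2,\chi^2)=\ov{J(\chi,\chi)}$, I expect to reach the clean identity
$$N(k)=(p-4)+\chi(k)\l(\ov{J(\chi,\chi)}-2\r)+\bar\chi(k)\l(J(\chi,\chi)-2\r),$$
so that the entire dependence on $k$ is carried by $\chi(k)=(\frac{k}{\pi})_3\in\{1,\omega,\omega^2\}$.

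The final step is to translate $J(\chi,\chi)$ into $r$ and $s$. Writing $J(\chi,\chi)=A+B\omega$ one has $A^2-AB+B^2=|J(\chi,\chi)|^2=p$, and substituting the three possible values of $\chi(k)$ into the displayed formula reproduces the three cases of the lemma exactly when $2A-B=r$ and $B=s$, i.e.\ $2J(\chi,\chi)=r+s\sqrt{-3}$, which is consistent with $4p=r^2+3s^2$. The main obstacle is precisely this normalization: one must show that, for the specific primary $\pi$ singled out by $(\frac{g}{\pi})_3=\omega$, the real and imaginary parts of $J(\chi,\chi)$ match the sign and congruence conditions $r\eq1\pmod3$, $s\eq0\pmod3$, $3s\eq(2g^n+1)r\pmod p$ that pin down $(r,s)$ uniquely. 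Carrying out this sign determination from scratch is the delicate classical part (see \cite{IR}); since it is already packaged in the cited \cite[Corollary 10.6.2(c)]{BEW}, the most economical route is to invoke that evaluation and merely check that its hypotheses and normalization agree with ours.
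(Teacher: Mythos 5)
The paper does not actually prove this lemma: it is quoted verbatim from \cite[Corollary 10.6.2(c)]{BEW}, so there is no internal argument to compare against. Your sketch is a correct outline of the standard Jacobi-sum derivation that underlies that citation. I checked the algebra: with $\chi=(\frac{\cdot}{\pi})_3$, the nine-term expansion, the degenerate terms (note that for exactly one of $i,j$ zero the contribution is $-\chi^i(k)$ resp.\ $-\chi^j(k)$ rather than $-1$; this is what produces the two $-2$'s below, and your final formula absorbs it correctly), together with $J(\chi,\bar\chi)=-\chi(-1)=-1$ and $J(\chi^2,\chi^2)=\ov{J(\chi,\chi)}$, do combine to
$$N(k)=(p-4)+\chi(k)\l(\ov{J(\chi,\chi)}-2\r)+\bar\chi(k)\l(J(\chi,\chi)-2\r),$$
and substituting $\chi(k)\in\{1,\omega,\omega^2\}$ reproduces the three stated values exactly when $2J(\chi,\chi)=r+s\sqrt{-3}$. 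You correctly isolate the one genuinely delicate point, namely that for the primary $\pi$ normalized by $(\frac{g}{\pi})_3=\omega$ the Jacobi sum satisfies this identity with $r\equiv1\pmod 3$, $s\equiv0\pmod 3$ and $3s\equiv(2g^n+1)r\pmod p$; proving that from scratch requires the classical sign determination $J(\chi,\chi)=\pi$ (cf.\ \cite[Ch.~9]{IR}), and deferring it to \cite{BEW} is precisely what the paper does for the entire statement. In short, your route supplies an actual proof skeleton where the paper offers only a citation, and the single step you outsource is the same step the paper outsources.
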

For any $0<k<p$ we define
$$r_k:=\left|\left\{(x,y):\ 0<x<y<p,y-x\equiv k\pmod p, \(\frac{x}{\pi}\)_3=\(\frac{y}{\pi}\)_3=1\right\}\right|.$$
We need the following result.
\begin{lemma}\label{Lem. A in the proof of Thm. A}
We have the following congruence:
$$\sum_{0<k<p/2}r_{p-k}\equiv\left|\left\{0<x<p/4:\ \(\frac{x}{\pi}\)_3=1\right\}\right| \pmod 2.$$
\end{lemma}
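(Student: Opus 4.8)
The plan is to recognize $\sum_{0<k<p/2}r_{p-k}$ as the number of pairs of cubic residues in $(0,p)$ whose difference exceeds $p/2$, and then to evaluate this count modulo $2$ by means of an involution. First I would turn the sum into an honest count of pairs. For $0<k<p/2$ we have $p/2<p-k<p$, and a pair $(x,y)$ contributing to $r_{p-k}$ satisfies $0<x<y<p$ and $y-x\eq p-k\pmod p$; since $0<y-x<p$, the congruence forces the exact equality $y-x=p-k$. As $k$ ranges over the integers in $(0,p/2)$, the quantity $p-k$ ranges over the integers in $(p/2,p)$, so if we set
$$\mathcal{S}:=\l\{(x,y):\ 0<x<y<p,\ \(\f{x}{\pi}\)_3=\(\f{y}{\pi}\)_3=1,\ y-x>p/2\r\},$$
then $\sum_{0<k<p/2}r_{p-k}=|\mathcal{S}|$.

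The crucial structural fact is that $-1$ is a cubic residue modulo $p$, being the cube of $-1$; hence $\(\f{-1}{\pi}\)_3=1$, and therefore $\(\f{p-x}{\pi}\)_3=\(\f{-1}{\pi}\)_3\(\f{x}{\pi}\)_3=\(\f{x}{\pi}\)_3$, so the reflection $x\mapsto p-x$ maps cubic residues to cubic residues. This lets me define $\phi(x,y)=(p-y,\,p-x)$ on $\mathcal{S}$. If $0<x<y<p$ then $0<p-y<p-x<p$; the difference is preserved, $(p-x)-(p-y)=y-x$, so the constraint $y-x>p/2$ survives; both cubic-residue conditions are preserved by the computation just made; and clearly $\phi\circ\phi=\t{id}$. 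Thus $\phi$ is an involution of $\mathcal{S}$.

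Next I would determine its fixed points. One has $\phi(x,y)=(x,y)$ exactly when $x+y=p$, i.e. $y=p-x$; together with $0<x<y$ and $y-x>p/2$ this gives $p-2x>p/2$, that is $x<p/4$. Moreover $\(\f{y}{\pi}\)_3=\(\f{p-x}{\pi}\)_3=\(\f{x}{\pi}\)_3$, so the pair of cubic-residue conditions collapses to the single requirement $\(\f{x}{\pi}\)_3=1$. Hence the fixed-point set of $\phi$ is in bijection with $\{0<x<p/4:\ \(\f{x}{\pi}\)_3=1\}$. Since the non-fixed points of $\phi$ split into $2$-cycles, $|\mathcal{S}|$ is congruent modulo $2$ to the number of fixed points, which yields exactly the claimed congruence.

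I do not anticipate a genuine obstacle: the argument reduces to two elementary observations, namely that the congruence $y-x\eq p-k\pmod p$ pins $y-x$ down exactly on the range $(0,p)$, and that $-1$ is a cube modulo $p$. The only point requiring a little care is checking that the constraint $y-x>p/2$ is compatible with the reflection and that, on the diagonal $x+y=p$, it tightens to the sharp threshold $x<p/4$ that produces the right-hand side; getting this threshold right is what makes the fixed-point count agree with the stated quantity.
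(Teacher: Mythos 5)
Your proof is correct and is essentially the paper's argument: the paper first substitutes $y\mapsto p-y$ to turn the set into $\{(x,y):0<x,y<p,\ x+y<p/2,\ (\frac{x}{\pi})_3=(\frac{y}{\pi})_3=1\}$ and then invokes the swap symmetry $(x,y)\mapsto(y,x)$, which is exactly your involution $\phi(x,y)=(p-y,p-x)$ viewed after that change of variables, with the same fixed-point set $\{0<x<p/4:(\frac{x}{\pi})_3=1\}$. Your write-up just makes explicit the two points the paper leaves implicit, namely that $-1$ is a cube so the reflection preserves the cubic-residue condition, and that the threshold $x<p/4$ comes from the constraint on the diagonal.
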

\begin{proof}
By definition $\sum_{0<k<p/2}r_{p-k}$ is clearly equal to
\begin{equation}\label{Eq. A in Lem. A}
\left|\left\{(x,y):\ 0<x<y<p,\ y-x>p/2,\ \(\frac{x}{\pi}\)_3=\(\frac{y}{\pi}\)_3=1\right\}\right|.
\end{equation}
Replacing $y$ by $p-y$, we obtain that (\ref{Eq. A in Lem. A}) is equal to
\begin{equation}\label{Eq. B in Lem. A}
\left|\left\{(x,y):\ 0<x,y<p,\ x+y<p/2,\ \(\frac{x}{\pi}\)_3=\(\frac{y}{\pi}\)_3=1\right\}\right|.
\end{equation}
By the symmetry we clearly have
$$\sum_{0<k<p/2}r_{p-k}\equiv\left|\left\{0<x<p/4:\ \(\frac{x}{\pi}\)_3=1\right\}\right| \pmod 2.$$
This completes the proof.
\end{proof}
Now we define the following sets:
\begin{align*}
A_1:&=\left\{0<x<p/2:\ \(\frac{x}{\pi}\)_3=1\right\},\\
A_{\omega}:&=\left\{0<x<p/2:\ \(\frac{x}{\pi}\)_3=\omega\right\},\\
A_{\omega^2}:&=\left\{0<x<p/2:\ \(\frac{x}{\pi}\)_3=\omega^2\right\}.\\
\end{align*}
We have the following result (Recall that $\mathfrak{p}$ is the prime ideal $\pi\Z[\omega]$).
\begin{lemma}\label{Lem. B in the proof of Thm. A} Let $p\equiv7\pmod{12}$ be a prime. Then we have

{\rm (i)} Recall that
$$\alpha_p:=|\{0<x<p/2:\ x\ \text{is a $6$-th power residue modulo $p$}\}|.$$
Then we have
$$\prod_{x\in A_1}x\equiv(-1)^{1+\alpha_p}\pmod p.$$

{\rm (ii)} Let
$$\beta_p:=|\{0<x<p/2:\ \(\frac{x}{p}\)=1\ \text{and}\ \(\frac{x}{\pi}\)_3=\omega\}|.$$
Then we have
$$\prod_{x\in A_{\omega}}x\equiv (-1)^{1+\beta_p}\omega^2\pmod {\mathfrak{p}}.$$

{\rm (iii)} Recall that
$$\gamma_p:=|\{0<x<p/2:\ \(\frac{x}{p}\)=1\ \text{and}\ \(\frac{x}{\pi}\)_3=\omega^2\}|.$$
Then we have
$$\prod_{x\in A_{\omega^2}}x\equiv (-1)^{1+\gamma_p}\omega\pmod {\mathfrak{p}}.$$
\end{lemma}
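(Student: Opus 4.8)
The plan is to prove all three parts uniformly, since (i), (ii) and (iii) are the cases $\chi=1,\omega,\omega^2$ of a single computation. Write $m:=(p-1)/6$; since $p\equiv7\pmod{12}$ we have $p\equiv1\pmod6$ and $p\equiv3\pmod4$, so $m$ is an \emph{odd} integer and $\left(\frac{-1}{p}\right)=-1$. Two elementary observations drive everything. First, $-1=(-1)^3$ is a cube, so $\left(\frac{-1}{\pi}\right)_3=1$; hence the involution $x\mapsto p-x$ of the interval $(0,p)$ preserves the cubic residue symbol $\left(\frac{\cdot}{\pi}\right)_3$, while, because $\left(\frac{-1}{p}\right)=-1$, it reverses the quadratic character. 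Second, for the fixed primitive root $g$ with $\left(\frac{g}{\pi}\right)_3=\omega$ we have $\left(\frac{g}{p}\right)=-1$ (as $g$ is a primitive root), and by the Euler-type criterion for the cubic symbol $g^{(p-1)/3}=g^{2m}\equiv\omega\pmod{\mathfrak p}$.

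Next I would compute an auxiliary product exactly. For $i\in\{0,1,2\}$ set $S_{\omega^i}:=\{0<x<p:\ \left(\frac{x}{p}\right)=1,\ \left(\frac{x}{\pi}\right)_3=\omega^i\}$. Writing $x=g^k$, membership in $S_{\omega^i}$ means $k$ is even and $k\equiv i\pmod3$, i.e. $k\equiv c_i\pmod6$ with $c_0=0,\ c_1=4,\ c_2=2$; thus $S_{\omega^i}=g^{c_i}H$, a coset of the subgroup $H$ of sixth-power residues, and $|S_{\omega^i}|=|H|=m$ is odd. Since $H$ is cyclic of odd order, the product of its elements is $1$, so $\prod_{x\in S_{\omega^i}}x\equiv (g^{c_i})^m=(g^{2m})^{c_i/2}\equiv\omega^{c_i/2}\pmod{\mathfrak p}$, which equals $1,\ \omega^2,\ \omega$ for $i=0,1,2$ respectively. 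This is the step that supplies the exact cube root appearing on the right-hand sides of (i)--(iii).

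Finally I would connect $S_{\omega^i}$ to $A_{\omega^i}$. Split $A_{\omega^i}=A_{\omega^i}^+\sqcup A_{\omega^i}^-$ according to whether $x$ is a quadratic residue, so that $|A_1^+|=\alpha_p$, $|A_\omega^+|=\beta_p$, $|A_{\omega^2}^+|=\gamma_p$. Using the two effects of $x\mapsto p-x$ from the first paragraph, the residues of $S_{\omega^i}$ in $(0,p/2)$ are exactly $A_{\omega^i}^+$, while those in $(p/2,p)$ are exactly $p-A_{\omega^i}^-$; hence $S_{\omega^i}=A_{\omega^i}^+\sqcup(p-A_{\omega^i}^-)$. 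Taking products and replacing each $p-x$ by $-x$ gives $\prod_{x\in S_{\omega^i}}x\equiv(-1)^{|A_{\omega^i}^-|}\prod_{x\in A_{\omega^i}}x\pmod{\mathfrak p}$, and since $|A_{\omega^i}^-|=m-|A_{\omega^i}^+|$ with $m$ odd this rearranges to $\prod_{x\in A_{\omega^i}}x\equiv(-1)^{1+|A_{\omega^i}^+|}\prod_{x\in S_{\omega^i}}x$. Combining with the coset computation yields the three claimed congruences simultaneously.

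The main obstacle is pinning down not just the magnitude but the exact sign and the correct cube root: the naive pairing $x\leftrightarrow p-x$ applied to the whole cubic-character class determines $\prod_{x\in A_{\omega^i}}x$ only up to a factor $\pm1$ and up to a square root of $\omega^{\pm1}$. The decisive idea is to work instead with the odd-sized coset $S_{\omega^i}$ of quadratic residues, where the primitive-root parametrization together with the Euler criterion $g^{2m}\equiv\omega\pmod{\mathfrak p}$ determines the product unambiguously; the oddness of $m$ is what converts $(-1)^{m-|A_{\omega^i}^+|}$ into the clean exponent $1+|A_{\omega^i}^+|$.
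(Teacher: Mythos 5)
Your proof is correct, and it reaches the lemma by a genuinely different route from the paper. The paper's own argument starts from the polynomial congruence $\prod_{0<x<p,\ (\frac{x}{\pi})_3=\omega^i}(T-x)\equiv T^n-\omega^i\pmod{\mathfrak p}$, evaluates at $T=0$, and uses the pairing $x\leftrightarrow p-x$ to conclude that $\bigl(\prod_{x\in A_{\omega^i}}x\bigr)^2\equiv\omega^i\pmod{\mathfrak p}$ (using that $n/2$ is odd); it then resolves the remaining ambiguity $\pm\omega^{2i}$ by applying the quadratic character to the product, which produces exactly the counts $\alpha_p,\beta_p,\gamma_p$. You never form a square: you restrict to the quadratic-residue part $S_{\omega^i}$, observe that it is a coset $g^{c_i}H$ of the subgroup $H$ of sixth powers of odd order $m=(p-1)/6$, compute its product exactly as $g^{c_im}\equiv\omega^{c_i/2}\pmod{\mathfrak p}$ via Euler's criterion, and then transfer back to $A_{\omega^i}$ with the involution $x\mapsto p-x$, whose effect on the two characters you correctly record ($-1$ is a cube but a quadratic non-residue since $p\equiv3\pmod4$). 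The sign-resolution is the same counting in both arguments, but your determination of the cube-root factor $1,\omega^2,\omega$ replaces the paper's polynomial identity and square-root extraction by a direct coset computation, which makes the three cases genuinely uniform and is arguably more self-contained. All supporting facts you invoke (oddness of $m$ for $p\equiv7\pmod{12}$, the product of an odd-order cyclic group being the identity, $g^{(p-1)/3}\equiv\omega\pmod{\mathfrak p}$, and the identification $|A_{\omega^i}|=m$) are correct, and the passage from a congruence modulo $\mathfrak p$ to one modulo $p$ in part (i) is harmless since both sides are rational integers; the argument is complete.
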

\begin{proof}
(i) One can verify the following polynomial congruence:
$$\prod_{0<x<p,(\frac{x}{\pi})_3=1}(T-x)\equiv T^{n}-1\pmod p.$$
Hence we have
$$(-1)^{n/2}\(\prod_{x\in A_1}x\)^2\equiv -1\pmod p.$$
As $p\equiv3\pmod4$, we have
$$\(\prod_{x\in A_1}x\)^2\equiv 1\pmod p.$$
Let $\alpha_p$ be as the above. Then it is clear that
$$\prod_{x\in A_1}x\equiv(-1)^{n/2-\alpha_p}\equiv(-1)^{1+\alpha_p}\pmod p.$$

(ii) As in (i), we also have
$$\prod_{0<x<p,(\frac{x}{\pi})_3=\omega}(T-x)\equiv T^n-\omega\pmod{\mathfrak{p}}.$$
Hence we obtain
$$\(\prod_{x\in A_{\omega}}x\)^2\equiv \omega\pmod{\mathfrak{p}}.$$
Noting that $\omega=(\omega^2)^2$ is a quadratic residue modulo $\mathfrak{p}$, by the definition of $\beta_p$ we have
$$\prod_{x\in A_{\omega}}x\equiv(-1)^{1+\beta_p}\omega^2\pmod{\mathfrak{p}}.$$

(iii) With essentially the same method used in (ii), one can verify (iii). This completes the proof.
\end{proof}

Let $\Phi_{p-1}(T)$ be the $(p-1)$-th cyclotomic polynomial, and let
$$P(T):=\prod_{1\le i<j\le n}(T^{3j}-T^{3i}).$$
Then we have the following result (cf. \cite[Lemma 2.5]{Wu-She}).
\begin{lemma}\label{Lem. cyclotomic polynomial}
Let $G(T)$ be an integral polynomial defined by
$$G(T)=\begin{cases}(-1)^{(n-2)/4}\cdot n^{n/2}&\mbox{if}\ p\equiv3\pmod4,
\\(-1)^{(n-4)/4}\cdot n^{n/2}\cdot T^{(p-1)/4}&\mbox{if}\ p\equiv1\pmod4.\end{cases}$$
Then $\Phi_{p-1}(T)\mid P(T)-G(T)$.
\end{lemma}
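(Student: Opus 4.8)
The plan is to use the fact that $\Phi_{p-1}(T)$ is irreducible over $\Q$ and is the minimal polynomial of any primitive $(p-1)$-th root of unity. Since $P(T)-G(T)\in\Z[T]$ (note that $n$ is even in both cases, so $n^{n/2}\in\Z$, and when $p\equiv1\pmod4$ the exponent $(p-1)/4=3n/4$ is a nonnegative integer), it suffices by Gauss's lemma to exhibit a single primitive $(p-1)$-th root of unity $\zeta$ with $P(\zeta)=G(\zeta)$; then $\Phi_{p-1}(T)\mid P(T)-G(T)$ in $\Z[T]$. I would take $\zeta=e^{2\pi i/(p-1)}=e^{2\pi i/(3n)}$ and set $\eta:=\zeta^3=e^{2\pi i/n}$, a primitive $n$-th root of unity. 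Then
$$P(\zeta)=\prod_{1\le i<j\le n}\left(\eta^{j}-\eta^{i}\right),$$
and since $\eta^1,\dots,\eta^n$ run over all $n$-th roots of unity, this is, up to a reordering sign, the Vandermonde product of the full set of $n$-th roots of unity.

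Next I would evaluate this Vandermonde explicitly. Writing $\eta^{k}-\eta^{j}=e^{\pi i(k+j)/n}\cdot 2i\sin(\pi(k-j)/n)$ and multiplying over all pairs, the phase factors contribute $e^{\pi i(n-1)^2/2}$ (from $\sum_{j<k}(j+k)=n(n-1)^2/2$), the factors $2i$ contribute $(2i)^{n(n-1)/2}$, and the sines contribute $\prod_{d=1}^{n-1}\sin(\pi d/n)^{\,n-d}$. For $n$ even, pairing $d$ with $n-d$ collapses this last product to $\left(\prod_{d=1}^{n/2-1}\sin(\pi d/n)\right)^{n}$, which by the classical identity $\prod_{d=1}^{n-1}\sin(\pi d/n)=n/2^{n-1}$ equals $n^{n/2}/2^{n(n-1)/2}$. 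Assembling these and absorbing the sign $(-1)^{n-1}$ coming from the cyclic reordering that moves $\eta^n=1$ into its natural position, I expect to obtain, for $n$ even,
$$P(\zeta)=(-1)^{n-1}\,i^{\,1+n(n-1)/2}\,n^{n/2}=-\,i^{\,1+n(n-1)/2}\,n^{n/2}.$$
As a consistency check, squaring gives $P(\zeta)^2=(-1)^{1+n(n-1)/2}n^n$, which agrees with the discriminant of $T^n-1$.

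Finally I would match this with $G(\zeta)$ in the two congruence classes. When $p\equiv3\pmod4$ we have $n\equiv2\pmod4$; writing $n=4t+2$ reduces the exponent $1+n(n-1)/2$ modulo $4$ and yields $-i^{\,1+n(n-1)/2}=(-1)^{t}=(-1)^{(n-2)/4}$, so $P(\zeta)=(-1)^{(n-2)/4}n^{n/2}=G(\zeta)$. When $p\equiv1\pmod4$ we have $n\equiv0\pmod4$ and $\zeta^{(p-1)/4}=\zeta^{3n/4}=e^{2\pi i/4}=i$; writing $n=4t$ gives $-i^{\,1+n(n-1)/2}=(-1)^{t-1}i=(-1)^{(n-4)/4}\,\zeta^{(p-1)/4}$, so again $P(\zeta)=G(\zeta)$.

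The main obstacle is the bookkeeping of the argument, i.e. correctly pinning down the power of $i$: the magnitude $n^{n/2}$ is forced by the discriminant of $T^n-1$, but the precise root-of-unity factor requires careful tracking of the phase $e^{\pi i(n-1)^2/2}$, the factor $(2i)^{n(n-1)/2}$, and the parity of the cyclic reordering, followed by reducing $1+n(n-1)/2$ modulo $4$ in each residue class of $p$.
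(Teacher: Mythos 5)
Your proof is correct, and it is genuinely self-contained where the paper is not: the paper does not prove this lemma at all, it simply cites \cite[Lemma 2.5]{Wu-She}. Your route --- reduce to checking $P(\zeta)=G(\zeta)$ at the single primitive root $\zeta=e^{2\pi i/(p-1)}$ (legitimate, since $\Phi_{p-1}$ is the minimal polynomial of $\zeta$, is monic, and $P-G\in\Z[T]$, so Gauss's lemma upgrades divisibility from $\Q[T]$ to $\Z[T]$), then evaluate the Vandermonde of the full set of $n$-th roots of unity via the factorization $\eta^k-\eta^j=e^{\pi i(k+j)/n}\cdot 2i\sin(\pi(k-j)/n)$ and the identity $\prod_{d=1}^{n-1}\sin(\pi d/n)=n/2^{n-1}$ --- is the standard way to prove such statements, and every step checks out: I verified your closed form $P(\zeta)=-i^{1+n(n-1)/2}n^{n/2}$ directly for $n=2$ and $n=4$, it squares to $\mathrm{disc}(T^n-1)=(-1)^{1+n(n-1)/2}n^n$, and the case analysis $n\equiv 2$, $n\equiv 0\pmod 4$ reproduces $G(\zeta)$ exactly, including the factor $\zeta^{(p-1)/4}=i$. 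One presentational caution: your intermediate sum $\sum_{j<k}(j+k)=n(n-1)^2/2$ is the value for the index range $0\le j<k\le n-1$ (the ``natural'' ordering $1,\eta,\dots,\eta^{n-1}$), which is consistent with your subsequent multiplication by the reordering sign $(-1)^{n-1}$; but for the range $1\le j<k\le n$ actually appearing in $P(\zeta)$ the sum is $n(n-1)(n+1)/2$, so you should state explicitly which ordering the phase is computed in, lest a reader think the two conventions have been mixed (they haven't --- the two discrepancies cancel, as your worked answer confirms).
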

Now we are in a position to prove our main result.

\noindent{\bf Proof of Theorem \ref{Thm. A}.} It follows from definition that
$$\sign(s_p)\equiv\prod_{1\le i<j\le n}\frac{g^{3j}-g^{3i}}{a_j-a_i}\pmod{\mathfrak{p}}.$$
We first consider the numerator. Since $p$ totally splits in the cyclotomic field $\Q(e^{2\pi i/(p-1)})$, we obtain that $\Phi_{p-1}(T)$ {\rm mod} $p\Z[T]$ totally splits in $\Z/p\Z[T]$. Also, the set of all primitive $(p-1)$-th roots of unity map bijectively onto the set of all primitive $(p-1)$-th roots of unity in the finite field $\F_p=\Z/p\Z$. Hence we have
\begin{equation}\label{Eq. A in the proof of Thm. A}
\Phi_{p-1}(T)\equiv \prod_{x\in\mathcal{P}}(T-x)\pmod{p},
\end{equation}
where
$$\mathcal{P}:=\{0<x<p: x\ \text{is a primitive root modulo $p$}\}.$$
By Lemma \ref{Lem. cyclotomic polynomial} and (\ref{Eq. A in the proof of Thm. A}) we have
$$\prod_{1\le i<j\le n}(g^{3j}-g^{3i})=P(g)\equiv G(g)\pmod p,$$
i.e.,
\begin{equation}\label{Eq. numerator in the proof of Thm. A}
\prod_{1\le i<j\le n}(g^{3j}-g^{3i})\equiv \begin{cases}(-1)^{(n-2)/4}\cdot n^{n/2}\pmod p&\mbox{if}\ 4\mid p-3,
\\(-1)^{(n-4)/4}\cdot n^{n/2}\cdot g^{(p-1)/4}\pmod p&\mbox{if}\ 4\mid p-1.\end{cases}
\end{equation}
By (\ref{Eq. numerator in the proof of Thm. A}) for any $g'\in\mathcal{P}$ we find that
$$\prod_{1\le i<j\le n}\frac{g^{3j}-g^{3i}}{(g^{'})^{3j}-(g^{'})^{3i}}\equiv \begin{cases}
 (g/g')^{\frac{p-1}{4}}\pmod p &\mbox{if}\ 4\mid p-1,\\
1 \pmod p&\mbox{if} \ 4\mid p-3.
\end{cases}$$
This implies $\sign(s_p(g))\cdot\sign(s_p(g^{-1}))=-1$ if $p\equiv1\pmod4$ and hence in the case $p\equiv1\pmod4$ we have
$$|\{g\in\mathcal{P}:\ \sign(s_p(g))=1\}|=|\{g\in\mathcal{P}:\ \sign(s_p(g))=-1\}|.$$
Also, in the case $p\equiv3\pmod4$, it is clear that $\sign(s_p(g))$ is independent on the choice of $g$.

We now consider the denominator and assume $p\equiv3\pmod4$. Recall that
$$r_k=\left|\left\{(x,y):\ 0<x<y<p,y-x\equiv k\pmod p, \(\frac{x}{\pi}\)_3=\(\frac{y}{\pi}\)_3=1\right\}\right|.$$
It is clear that
\begin{align*}
\prod_{1\le i<j\le n}(a_j-a_i)\equiv \prod_{0<k<p}k^{r_k}&\equiv(-1)^{\sum_{0<k<p/2}r_{p-k}}\cdot\prod_{0<k<p/2}k^{r_k+r_{p-k}}\\
&\equiv(-1)^{\delta_p}\prod_{0<k<p/2}k^{r_k+r_{p-k}}\pmod {\mathfrak{p}},
\end{align*}
where
$$\delta_p=|\{0<x<p/4:\ x\ \text{is a cubic residue modulo $p$}\}|.$$
The last congruence follows from Lemma \ref{Lem. A in the proof of Thm. A}. By the definition of $r_k$ one can verify that for all $0<k<p$ we have
\begin{equation}\label{Eq. r(k)+r(p-k)}
r_k+r_{p-k}=N(k)/9,
\end{equation}
where $N(k)$ is defined in Lemma \ref{Lem. sum of two cubes}. In view of the above, we obtain that
$\prod_{1\le i<j\le n}(a_j-a_i)$ {\rm mod} $\mathfrak{p}$ is equal to
$$(-1)^{\delta_p}\prod_{x\in A_1}x^{\frac{p+r-8}{9}}\prod_{y\in A_{\omega}}y^{\frac{2p-r+3s-4}{18}}\prod_{z\in A_{\omega^2}}z^{\frac{2p-r-3s-4}{18}}\ \text{\rm mod}\  {\mathfrak{p}}.$$
By Lemma \ref{Lem. B in the proof of Thm. A} we have
\begin{equation*}
\prod_{x\in A_1}x^{\frac{p+r-8}{9}}\equiv(-1)^{(1+\alpha_p)(1+r)}\pmod {\mathfrak{p}},
\end{equation*}
\begin{equation*}
\prod_{y\in A_{\omega}}y^{\frac{2p-r+3s-4}{18}}\prod_{z\in A_{\omega^2}}z^{\frac{2p-r-3s-4}{18}}
\equiv (-1)^{(\beta_p+\gamma_p)(-r+3s)/2+(1+\gamma_p)s}\omega^{2s/3}\pmod{\mathfrak{p}}.
\end{equation*}
Note that
$$\alpha_p+\beta_p+\gamma_p=|\{0<x<p/2:\ x\ \text{is a quadratic residue modulo $p$}\}|.$$
By the class number formula of $\Q(\sqrt{-p})$ (cf. \cite[Theorem 4, p. 346]{BS}) we know that
$$|\{0<x<p/2:\ x\ \text{is a quadratic residue modulo $p$}\}|\equiv \frac{h(-p)+1}{2}\pmod 2,$$
where $h(-p)$ is the class number of $\Q(\sqrt{-p})$. By the above we obtain that
$\prod_{1\le i<j\le n}(a_j-a_i)$ {\rm mod} $\mathfrak{p}$ is equal to
\begin{equation}\label{Eq. B in the proof of Thm. A}
(-1)^{\delta_p+(1+\alpha_p)(1+r)+(h(-p)+1-2\alpha_p)(2-r+3s)/4+s(1+\gamma_p)}\omega^{2s/3}\
\text{\rm mod}\ \mathfrak{p}.
\end{equation}
By (\ref{Eq. numerator in the proof of Thm. A}) we have
\begin{equation}\label{Eq. C in the proof of Thm. A}
\prod_{1\le i<j\le n}(g^{3j}-g^{3i})\equiv (-1)^{(n-2)/4}\cdot n^{n/2}\pmod p.
\end{equation}
It is also known that $3$ is a cubic residue modulo $p$ if and only if the equation
$$4p=X^2+243Y^2$$
has integral solutions. With our notations in (\ref{Eq. quadratic forms represents 4p}), it is equivalent to $s\equiv 0\pmod 9$. We now divide the remaining proof into two cases.

{\bf Case I.} $3$ is not a cubic residue modulo $p$.

In this case, as
$$\sign(s_p)\equiv\prod_{1\le i<j\le n}\frac{g^{3j}-g^{3i}}{a_j-a_i}\equiv\pm1\pmod{\mathfrak{p}},$$
we must have $n^{\frac{n}{2}}\equiv\ve\omega^{2s/3}$ for some $\ve\in\{\pm1\}$. Hence
$$\ve\equiv n^{3n/2}\equiv\(\frac{-3}{p}\)\equiv1\pmod {\mathfrak{p}}.$$
Combining this with (\ref{Eq. B in the proof of Thm. A}) and (\ref{Eq. C in the proof of Thm. A}), we obtain
$$\sign(s_p(g))=
(-1)^{\delta_p+(1+\alpha_p)(1+r)+(h(-p)+1-2\alpha_p)(2-r+3s)/4+s(1+\gamma_p)+(n-2)/4}.$$

{\bf Case II.} $3$ is a cubic residue modulo $p$.

In this case we have
$n^{n/2}=\pm1$ in this case and hence $$n^{n/2}=n^{3n/2}\equiv \(\frac{-3}{p}\)=1\pmod{\mathfrak{p}}.$$
Combining this with (\ref{Eq. B in the proof of Thm. A}) and (\ref{Eq. C in the proof of Thm. A}), we also obtain
$$\sign(s_p(g))=(-1)^{\delta_p+(1+\alpha_p)(1+r)+(h(-p)+1-2\alpha_p)(2-r+3s)/4+s(1+\gamma_p)+(n-2)/4}.$$
This completes the proof.\qed

\Ack\ This research was supported by the National Natural Science Foundation of
China (Grant No. 11971222). The first author was also supported by NUPTSF (Grant No. NY220159).

\end{document}